\def\theenumi{\arabic{enumi}}
\def\theenumii{\alph{enumii}}
\def\p@enumii{\theenumii.}
\def\theenumiii{\arabic{enumiii}}
\def\p@enumiii{(\theenumi)(\theenumii)}
\def\p@enumiv{\p@enumiii.\theenumiii}
\newtheorem{theorem}{Theorem}
\newtheorem{conjecture}[theorem]{Conjecture}
\newtheorem{corollary}[theorem]{Corollary}
\newtheorem{lemma}[theorem]{Lemma}
\newtheorem{definition}{Definition}
\newtheorem*{lemma*}{Lemma}
\theoremstyle{remark}
\newcommand{\Z}{\mathbb{Z}}
\newcommand{\B}{\mathfrak{B}}
\title{Zero Divisor Graphs of Quotient Rings}
\author{Rachael Alvir}
\begin{document}

\maketitle
 
 \begin{abstract}
The \emph{compressed zero-divisor graph} $\Gamma_C(R)$ associated with a commutative ring $R$ has vertex set equal to the set of equivalence classes $\{ [r] \mid r \in Z(R), r \neq 0 \}$ where $r \sim s$ whenever $ann(r) = ann(s)$. Distinct classes $[r],[s]$ are adjacent in $\Gamma_C(R)$ if and only if $xy = 0$ for all $x \in [r], y \in [s]$. In this paper, we explore the compressed zero-divisor graph associated with quotient rings of unique factorization domains. Specifically, we prove several theorems which exhibit a method of constructing $\Gamma(R)$ for when one quotients out by a principal ideal, and prove sufficient conditions for when two such compressed graphs are graph-isomorphic. We show these conditions are not necessary unless one alters the definition of the compressed graph to admit looped vertices, and conjecture necessary and sufficient conditions for two compressed graphs with loops to be isomorphic when considering any quotient ring of a unique factorization domain.
 \end{abstract}

\section{Introduction}

In \emph{Coloring of Commutative Rings} \cite{beck}, I. Beck introduces the \emph{zero-divisor graph} $\Gamma(R)$  associated with the zero-divisor set of a commutative ring, whose vertex set is the set of zero-divisors. Two distinct zero-divisors $x,y$ are adjacent in $\Gamma(R)$ if and only if $xy = 0$. The zero-divisor graph establishes a "connection between graph theory and commutative ring theory which hopefully will turn out to mutually beneficial for those two branches of mathematics."  M.  Axtell and J. Stickles \cite{axtell}  remark that "in general, the set of zero-divisors lacks algebraic structure," suggesting that turning to the zero-divisor graph may both reveal both ring-theoretical properties and impose a graph-theoretical  structure. Beck's hopes have certainly been met; his now classical paper motivated an explosion of research in this and similar associated graphs in the past decade. His definition has since been modified to emphasize the fundamental structure of the zero-divisor set: Anderson's definition given in \cite{anderson}, which excludes the vertex $0$ from the graph, is now considered standard.

In 2009, S. Spiroff and C. Wickham formalized the \emph{compressed zero-divisor graph} $\Gamma_C(R)$, which has vertex set equal to the set of equivalence classes $\{ [r] \mid r \in Z(R), r \neq 0 \}$ where $r \sim s$ whenever $ann(r) = ann(s)$. Distinct classes $[r],[s]$ are adjacent in $\Gamma_C(R)$ if and only if $xy = 0$ for all $x \in [r], y \in [s]$. It is proved in \cite{mulay} that $xy = 0$ for all $x \in [r], y \in [s]$ if and only if $rs = 0$. In other words, multiplication of equivalence classes is well-defined. This graph "represents a more succinct description of the zero-divisor activity" \cite{spiroff}.  Spiroff's compressed graph was inspired by S. Mulay's work in \cite{mulay} and has also been called the Mulay graph or the graph of equivalence classes of zero-divisors. The compressed graph has many advantages over the traditional zero-divisor graph, as it is often finite in cases where the zero-divisor set is infinite, and reveals associated primes.

The purpose of this paper is to study the compressed zero-divisor graph, and discover in what way it better describes the core behavior of the zero-divisor set. This question becomes particularly tractable in unique factorization domains, building off intuition built in $\Z / \langle n \rangle$. Our main theorem reveals that the overall structure of the zero-divisor activity for quotient rings of unique factorization domains is determined by a finite set of key elements, which we call the \emph{zero-divisor basis} for the ring. As a result of this observation, we are able to state sufficient conditions for two compressed graphs of such rings to be graph-isomorphic, and conjecture necessity when the compressed graph admits looped vertices. In the final section, we conjecture that these results may be extended to any quotient ring of a unique factorization domain.

\section{Background}

One motivation for the use of the compressed zero-divisor graph is the observation that new zero-divisors may be trivially found; every multiple of a zero-divisor is also a zero-divisor. Moreover, these multiples often behave in exactly the same way as their parent zero-divisor: they annihilate the same elements. This phenomena is expressed in the following lemma.

\begin{lemma}
Let $R$ be a commutative ring, $z \in Z(R)$, $n \not \in Z(R)$. Then $[nz] = [z]$. 
\label{nozd}
\end{lemma}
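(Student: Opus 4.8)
The plan is to unwind the definition of the compression relation and reduce everything to a statement about annihilator ideals: by definition $[nz] = [z]$ holds precisely when $\mathrm{ann}(nz) = \mathrm{ann}(z)$, so I would prove this equality by double containment. Before doing that, I would dispatch a small preliminary point, namely that $[nz]$ is a legitimate vertex at all. Since $z \in Z(R)$ there is a nonzero $w$ with $zw = 0$, whence $(nz)w = n(zw) = 0$, so $nz \in Z(R)$; and $nz \neq 0$ because $n \notin Z(R)$ means $n$ is a nonzerodivisor, so $nz = 0$ together with $z \neq 0$ would be a contradiction. This is the first (and essentially only) place the hypothesis on $n$ is used.

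For the inclusion $\mathrm{ann}(z) \subseteq \mathrm{ann}(nz)$: if $xz = 0$ then $x(nz) = n(xz) = n \cdot 0 = 0$ by commutativity and associativity, so $x \in \mathrm{ann}(nz)$. This direction uses nothing special about $n$ and would hold for any ring element in place of $n$.

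For the reverse inclusion $\mathrm{ann}(nz) \subseteq \mathrm{ann}(z)$: suppose $x(nz) = 0$. Rearranging, $n(xz) = 0$. Now the assumption $n \notin Z(R)$ is essential: if $xz$ were nonzero, then $n$ would annihilate the nonzero element $xz$ and hence lie in $Z(R)$, a contradiction; therefore $xz = 0$, i.e., $x \in \mathrm{ann}(z)$. Combining the two inclusions yields $\mathrm{ann}(nz) = \mathrm{ann}(z)$, and therefore $[nz] = [z]$.

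I do not anticipate any genuine obstacle here; the argument is a two-line containment check. The only thing requiring a moment's care is the bookkeeping at the start — confirming that $nz$ is a nonzero zero-divisor so that $[nz]$ really names a vertex — and recognizing that the nonzerodivisor hypothesis on $n$ is exactly what powers the cancellation step $n(xz) = 0 \Rightarrow xz = 0$ in the nontrivial inclusion.
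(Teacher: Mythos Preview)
Your proposal is correct and follows essentially the same approach as the paper: a double-containment argument showing $\mathrm{ann}(nz) = \mathrm{ann}(z)$, with the nontrivial inclusion powered by the cancellation property of the nonzerodivisor $n$. The only difference is that you add a preliminary verification that $nz$ is itself a nonzero zero-divisor (so that $[nz]$ is a well-defined vertex), which the paper omits; this is a welcome bit of extra care rather than a different method.
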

\begin{proof}
($\subseteq$) Let $x \in ann(nz)$. Then $(nz)x = 0 \Rightarrow n(zx) = 0$. Since $n \not \in Z(R)$, it follows by negating the definition of zero-divisor that $zx = 0$. Therefore $x \in ann(z)$. \\
($\supseteq$) Let $x \in ann(z)$. Then $zx = 0 \Rightarrow (nz)x = 0 \Rightarrow x \in ann(nz)$.
\end{proof}

The above is useful when considering the zero-divisor graph and thus appears in the literature, although often in its weaker form where $n$ is a unit. This observation leads us to consider that some zero-divisors do not determine the behavior of the zero-divisor set, and have their behavior determined by other "important" zero-divisors. Naturally, one wishes to identify which zero-divisors actually determine the zero-divisor graph's structure.
To this end, we define two zero-divisors $r,s$ to be equivalent if $ann(r) = ann(s)$, and denote the equivalence class of $r$ with $[r]$. Because copycat zero-divisors add unnecessary visual information to $\Gamma(R)$, we wish to associate a graph with this equivalence relation. The compressed zero-divisor graph then becomes the new object of study.

Are there any other situations, other than that expressed in lemma \ref{nozd}, in which the annihilators of two elements are equal? This is addressed by the following lemma: 

\begin{lemma} 
Let $S$ be a commutative ring, $c \in S$, $I$ an ideal of $S$. Then $c + I \in Z(S/I)$ if and only if $\exists d \in S-I$ such that $cd + I = I$.
\label{zd}
\end{lemma}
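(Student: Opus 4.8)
The plan is to prove the biconditional by directly unwinding definitions, since the statement is essentially a translation of ``$c+I$ is a zero-divisor of $S/I$'' into the language of $S$ and $I$. The facts I would use are the standard descriptions of the quotient ring: its zero element is the coset $I$, two cosets $a+I$ and $b+I$ coincide iff $a-b\in I$, and $(a+I)(b+I)=ab+I$. In particular a coset $d+I$ is nonzero in $S/I$ precisely when $d\notin I$, i.e.\ when $d\in S-I$; this correspondence between nonzero cosets and representatives lying outside $I$ is the one observation the whole argument rests on.

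For the forward direction, I would assume $c+I\in Z(S/I)$. By definition of zero-divisor there is a nonzero element $d+I\in S/I$ with $(c+I)(d+I)=I$. Translating, ``$d+I$ nonzero'' means $d\in S-I$, and the product condition becomes $cd+I=I$, so this $d$ is the required witness. For the converse, given $d\in S-I$ with $cd+I=I$, the coset $d+I$ is a nonzero element of $S/I$ and $(c+I)(d+I)=cd+I=I$, so $c+I$ annihilates a nonzero element of $S/I$ and is therefore a zero-divisor.

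I do not expect a genuine obstacle here; the only thing worth flagging is the degenerate case $I=S$, where $S/I$ is the zero ring (hence has no nonzero elements, so $c+I\notin Z(S/I)$) while simultaneously $S-I=\emptyset$ (so no witness $d$ exists), making both sides of the equivalence false. With that case noted, the proof is a routine double argument carried out entirely at the level of cosets.
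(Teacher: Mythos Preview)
Your proposal is correct and matches the paper's own proof in approach: both simply unwind the definition of zero-divisor in $S/I$, using that $d+I\neq I$ iff $d\notin I$ and that $(c+I)(d+I)=cd+I$. The paper compresses this into a two-line chain of biconditionals rather than separating the directions, but the content is identical.
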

\begin{proof}
\begin{align*}
c+I \in Z (S / I) & \iff \exists d \in S \text{ s.t. } cd+I = I \text{ and } d+I \neq I \\
& \iff \exists d \in S \text{ s.t. } cd+I = I \text{ and } d \not \in I
\end{align*}
\end{proof}

We may restate this lemma so that it explicitly relates to the zero-divisor graph. We follow standard notation in graph theory: given a graph $G$, its vertex set is denoted $V(G)$, and edge set $E(G)$.

\begin{corollary}
Let $S$ be a commutative ring, $I$ an ideal of $S$. Then $\exists c, d \in S-I$ such that $cd + I = I$, $c+I \neq d + I$ if and only if 
\begin{enumerate}
\item $c + I \in V(\Gamma(S/I))$, and
\item $e = \{c+I,d+I\} \in E(\Gamma(S/I))$. 
\end{enumerate}
\end{corollary}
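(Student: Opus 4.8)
The plan is to read this off from Lemma~\ref{zd} by translating everything into the vocabulary of $\Gamma(S/I)$. Under the standard (Anderson) convention, $V(\Gamma(S/I))$ is the set of nonzero zero-divisors of $S/I$, and two distinct vertices $x,y$ are joined by an edge exactly when $xy = I$; the key bookkeeping observation is that ``$c \in S - I$'' is literally the statement ``$c + I \neq I$ in $S/I$.'' So the whole argument amounts to unwinding definitions, with Lemma~\ref{zd} supplying the one genuinely nontrivial equivalence, namely membership in $Z(S/I)$.

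For the forward direction I would start from $c, d \in S - I$ with $cd + I = I$ and $c + I \neq d + I$. Since $d \notin I$, the class $d + I$ is a nonzero element of $S/I$ that is annihilated by $c + I$, so by Lemma~\ref{zd} we obtain $c + I \in Z(S/I)$; combined with $c + I \neq I$ this gives (1). For (2), the equation $cd + I = I$ says precisely $(c+I)(d+I) = I$ in $S/I$, and together with the hypothesis $c + I \neq d + I$ (and the symmetric observation that $d + I$ is also a vertex) this is exactly the assertion that $\{c+I, d+I\} \in E(\Gamma(S/I))$.

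For the converse I would simply unpack (1) and (2). Condition (1) forces $c + I \neq I$, hence $c \in S - I$; condition (2), being an edge, forces its other endpoint $d + I$ to be a vertex and hence $d \in S - I$, forces $c + I \neq d + I$ since an edge joins distinct vertices, and forces $(c+I)(d+I) = I$, i.e.\ $cd + I = I$. Assembling these statements recovers the left-hand side of the biconditional.

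I do not expect a real obstacle here; the only point requiring care is the vertex-set convention, since whether the class $I$ (the zero of $S/I$) counts as a vertex is exactly what makes ``$c \in S - I$'' line up on the nose with ``$c + I$ is a vertex of $\Gamma(S/I)$.'' It is also worth remarking in passing that condition (1) is in fact subsumed by (2) — every endpoint of an edge is automatically a vertex — so the corollary is stated with mild redundancy for emphasis.
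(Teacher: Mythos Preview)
Your proposal is correct and follows essentially the same approach as the paper: both arguments amount to unwinding the definitions of $V(\Gamma(S/I))$ and $E(\Gamma(S/I))$ and matching them against the conditions $c,d \in S-I$, $cd+I=I$, and $c+I\neq d+I$. The paper's own proof is in fact much terser than yours---just two sentences noting that ``$c\in S-I$'' excludes the zero vertex and ``$c+I\neq d+I$'' ensures edges join distinct vertices---so your version, with both directions spelled out and the observation that (1) is redundant given (2), is if anything more thorough.
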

\begin{proof} The extra condition that $c \in S-I$ ensures that $0$ is not a vertex of $\Gamma(S/I)$. The condition that $c+I \neq d + I$ ensures that edges exist only between distinct vertices.
\end{proof}

Using this simple intuition as a springboard, we proceeed to consider the consequences of this relationship between the ideal and the zero-divisor set of the quotient ring.

\section{Results}

Corollary 3 indicates that in order to investigate the zero-divisor set of a quotient ring, one must look at the factors of the elements in the kernel the homomorphism. Not every factor, however, will do. If $I$ is the principal ideal $\langle n \rangle$, we will wish to restrict our attention only to the factors of $n$. We therefore utilize the following definition. 

\begin{definition}
Let $D$ be a unique factorization domain and $\langle n \rangle$ a principal ideal of $D$. A \textbf{Zero-Divisor Basis} $\mathfrak{B}$ for $D / \langle n \rangle$ is a set of all nontrivial divisors of $n$ such that no two distinct elements in the basis are associates. In symbols,
\begin{itemize}
\item $d \in \B \Rightarrow d \mid n, d$ is not a unit, $d$ is not an associate of $n$.
\item $d_1, d_2 \in \B \Rightarrow d_1 = d_2$ or $d_1, d_2$ are not associates.
\end{itemize}
\end{definition}

It is our claim that such factors are the only ones which determine the behavior of the zero-divisor set, i.e., every zero-divisor has the same annihilator as the image of some irreducible divisor of $n$.

\begin{lemma}
Let $D$ be a UFD, $I$ an ideal of $D$, and $\phi$ the canonical homomorphism from $D$ onto $D / I$. Let $a,n \in D$. Then $[\phi(\gcd(a,n))]$ is well-defined.
\end{lemma}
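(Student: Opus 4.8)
The plan is to unwind what ``well-defined'' means here. In the UFD $D$, the element $\gcd(a,n)$ is only determined up to multiplication by a unit: if $g$ and $g'$ are each a greatest common divisor of $a$ and $n$, then $g \mid g'$ and $g' \mid g$, so $g' = ug$ for some unit $u \in D$. Thus the symbol $[\phi(\gcd(a,n))]$ — the equivalence class of $\phi(\gcd(a,n))$ under the relation $r \sim s \iff ann(r) = ann(s)$ in $D/I$ — is well-defined precisely when $[\phi(g)] = [\phi(ug)]$ for every unit $u \in D$; equivalently, when $ann(\phi(g)) = ann(\phi(ug))$ in $D/I$. So the whole lemma reduces to this single claim.

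To prove the claim, first observe that $\phi(u)$ is a unit in $D/I$: since $\phi$ is the canonical quotient homomorphism, $\phi(1) = 1 + I$, so $uv = 1$ in $D$ forces $\phi(u)\phi(v) = 1 + I$. Assuming $I \neq D$ (the case $I = D$ is trivial, $D/I$ being the zero ring), a unit of $D/I$ is not a zero-divisor. Now $\phi(ug) = \phi(u)\phi(g)$, so if $\phi(g) \in Z(D/I)$ we may apply Lemma \ref{nozd} with the ring $R = D/I$, the zero-divisor $z = \phi(g)$, and the non-zero-divisor $\phi(u)$ to conclude $[\phi(ug)] = [\phi(u)\phi(g)] = [\phi(g)]$. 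If instead $\phi(g) \notin Z(D/I)$, then $\phi(ug)$ is a product of two non-zero-divisors and hence also not a zero-divisor, and a one-line computation in the spirit of Lemma \ref{nozd} — using that $\phi(u)$ is invertible, so $x\phi(u)\phi(g) = 0 \iff x\phi(g) = 0$ — shows $ann(\phi(g)) = ann(\phi(ug))$ anyway. In every case the two classes coincide, which proves the claim and hence the lemma.

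There is no serious obstacle here: the substance of the lemma is the bookkeeping observation that passing to a quotient does not spoil the ``up to associates'' ambiguity of the gcd, because units of $D$ survive as units — hence as non-zero-divisors — of $D/I$, and Lemma \ref{nozd} already tells us that multiplying by a non-zero-divisor leaves the annihilator unchanged. The only points needing any care are the degenerate case $I = D$ and the case in which $\phi(g)$ happens not to be a zero-divisor, so that Lemma \ref{nozd} does not apply verbatim; both are dispatched immediately as above. Stylistically one could simply reprove the short annihilator computation directly for a unit multiplier rather than cite Lemma \ref{nozd}, which keeps the argument self-contained, and I would present it that way.
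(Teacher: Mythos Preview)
Your proposal is correct and follows essentially the same route as the paper: the gcd is unique up to associates, units map to units under $\phi$, units are non-zero-divisors, so Lemma~\ref{nozd} gives $[\phi(ug)]=[\phi(g)]$. Your treatment is in fact slightly more careful than the paper's, since you separately handle the case $\phi(g)\notin Z(D/I)$ where the hypothesis of Lemma~\ref{nozd} is not literally met; the paper simply invokes Lemma~\ref{nozd} without comment.
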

\begin{proof}
If $I$ is trivial, then there is only one annihilator class in $D/I$ and we are done. If not, then since $\phi$ is onto, the image of a unit under $\phi$ is a unit. But a unit is never a zero-divisor. By Lemma \ref{nozd}, images of associates therefore share equivalence classes. However, every $\gcd(a,n)$ is unique up to associates, so $[\phi(\gcd(a,n))]$ is well-defined.
\end{proof}

\begin{theorem}
Let $D$ be a UFD, $\langle n \rangle$ a principal ideal of $D$, and $\phi$ the canonical homomorphism from $D$ onto $D / \langle n \rangle$. Then $\forall a \in D, [\phi(a)] = [\phi(\gcd(a,n))]$.
\end{theorem}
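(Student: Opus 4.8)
The plan is to compute the annihilator of $\phi(a)$ in $D/\langle n\rangle$ directly and observe that it depends on $a$ only through $\gcd(a,n)$, which is precisely what the asserted equality of classes says. First I would clear away the degenerate cases: if $\langle n\rangle$ is trivial, or if $n$ is a unit (so $D/\langle n\rangle$ is the zero ring), there is a single annihilator class and nothing to prove; this is the content of the first sentence of the preceding lemma. So assume $n$ is a nonzero nonunit, and set $g=\gcd(a,n)$.

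Using unique factorization I would then write $n = g n'$ and $a = g a'$ with $\gcd(a',n')=1$ — the familiar fact that dividing out a gcd leaves coprime cofactors (if a prime $p$ divided both $a'$ and $n'$, then $pg$ would be a common divisor of $a$ and $n$ strictly larger than $g$). Writing $\overline{x}$ for $\phi(x)=x+\langle n\rangle$, the core of the proof is the chain of equivalences, for arbitrary $x \in D$,
\[
\overline{x}\in ann(\phi(a)) \iff n \mid ax \iff g n' \mid g a' x \iff n' \mid a' x \iff n' \mid x,
\]
where the first step is just unwinding $\phi(a)\overline{x}=\overline{ax}=\overline 0$, the third is cancellation of the nonzero element $g$ in the domain $D$, and the last is the generalized Euclid lemma applied to the coprime pair $n',a'$. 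Running this verbatim with $g$ in place of $a$ (the cofactor of $g$ is now $1$, trivially coprime to $n'$) gives $\overline{x}\in ann(\phi(g)) \iff n' \mid x$ as well. Hence $ann(\phi(a)) = ann(\phi(g)) = \{\,\overline{x} : n' \mid x\,\}$, so $[\phi(a)] = [\phi(\gcd(a,n))]$, and since the previous lemma guarantees the latter is well-defined, we are done.

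I expect the only real content to sit in the implication $n' \mid a' x \Rightarrow n' \mid x$: this is exactly where the UFD hypothesis is used, via multiplicativity of coprimality (if $\gcd(n',a')=1$ then $\gcd(n',a'x)=\gcd(n',x)$, so $n'$ dividing $a'x$ forces $n'\mid x$). Everything else — the coprimality of the cofactors, the cancellation of $g$, and the translation between "$\overline{x}$ annihilates $\phi(a)$" and "$n \mid ax$" — is routine divisibility bookkeeping in an integral domain. I would nonetheless double-check the boundary situations in which $\phi(a)$ or $\phi(\gcd(a,n))$ is a unit or is zero, to make sure they are consistent with the way $[\,\cdot\,]$ is being used; note, however, that the displayed chain already covers the case $n'$ a unit with $a'$ arbitrary without any modification, so no separate argument is needed there.
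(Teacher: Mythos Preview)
Your proof is correct and takes a somewhat cleaner route than the paper's. The paper writes out full prime factorizations $n=\prod p_i^{s_i}$ and $a=y\prod p_i^{k_i}$ with $\gcd(y,n)=1$, first invokes Lemma~\ref{nozd} to discard the coprime cofactor $y$ (since $\phi(y)$ is not a zero-divisor), and then compares $\prod p_i^{k_i}$ with $\gcd(a,n)=\prod p_i^{\min(k_i,s_i)}$ via explicit exponent arithmetic together with a prime-by-prime application of Euclid's lemma. Your cofactor decomposition $n=gn'$, $a=ga'$ sidesteps both the explicit factorizations and the separate appeal to Lemma~\ref{nozd}: a single chain of equivalences computes $ann(\phi(a))$ directly as the image of $\langle n'\rangle$, and the identical formula for $ann(\phi(g))$ falls out by taking $a'=1$. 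This is more conceptual and makes transparent that the annihilator depends only on $n' = n/\gcd(a,n)$; the paper's exponent-level argument, by contrast, dovetails immediately with the zero-divisor basis description in the next theorem, where the $\min(k_i,s_i)$ bookkeeping is reused.
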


\begin{proof}
If $\langle n \rangle$ is maximal or trivial, there is only one annihilator class in $D/\langle n \rangle$ and we are done. If not, then $n$ is nonzero and not a unit. Let $n = \prod p_i^{s_i}$ be a factorization of $n$ into irreducibles. Fix $a \in D$.

If $a = 0$, then $[\phi(\gcd(0,n))] =  [\phi(n)] = [0]$ and we are done. 

Suppose $a \neq 0$. We may write $a = y \prod p_i^{k_i}$ where $\gcd(y,n) = 1$. Then by Lemma $\ref{zd}$ and since $y \neq 0$, $\phi(y)$ is not a zero-divisor, so  $[\phi(a)] = [\phi(\prod p_i^{k_i})].$ But 
$[\phi(\prod p_i^{k_i})] = [\phi(\gcd(a,n))]$ if and only if $\forall x \in D, x \prod p_i^{k_i} \in \langle n \rangle  \iff x \gcd(a,n) \in \langle n \rangle$. It therefore suffices to show that the equivalence $\forall x \in D, n \mid x \prod p_i^{k_i}  \iff n \mid x \gcd(a,n)$ holds. \\

($\Leftarrow$) Let $x \in D$, and suppose $n \mid x \gcd(a,n)$. Since $\gcd(y,n) = 1$, then $\gcd(a,n) = \gcd(\prod p_i^{k_i}, n)$, which implies that $\gcd(a,n) \mid \prod p_i^{k_i}$. We conclude that $n \mid x \prod p_i^{k_i}$. \\
($\Rightarrow$)
Let $x \in D$, and suppose $n \mid x \prod p_i^{k_i}$. Then 
$$\prod p_i^{min(k_i, s_i)} \prod p_i^{s_i-min(k_i,s_i)} \big| x \prod p_i^{min(k_i,s_i)} \prod p_i^{k_i - min(k_i,s_i)}.$$
Since the cancellation property holds in UFD's, 
$$\prod p_i^{s_i-min(k_i,s_i)} \big| x \prod p_i^{k_i - min(k_i,s_i)}.$$
However, by definition of minimum, if $k_i - min(k_i, s_i) \neq 0$, then  $s_i - min(k_i,s_i) = 0$. In other words, $\gcd( \prod p_i^{k_i - min(k_i,s_i)}, p_i^{s_i - min(k_i,s_i)}) = 1.$ 
By Euclid's Lemma, we conclude that
$$\prod p_i^{s_i-min(k_i,s_i)} \big| x,$$
which implies that
$$\prod p_i^{min(k_i, s_i)} \prod p_i^{s_i-min(k_i,s_i)} \big| x \prod p_i^{min(k_i, s_i)}.$$
Therefore
$n \mid x \prod p_i^{min(k_i,s_i)}$, which is what we desired to show.
\end{proof}

\begin{theorem}
Let $D$ be a unique factorization domain, and $\langle n \rangle$ a nontrivial principal ideal of $D$. Let $\phi$ be the canonical homomorphism from $D$ onto $D / \langle n \rangle$. 

Each vertex of $\Gamma_C(D / \langle n \rangle)$ is the equivalence class $[\phi(d)]$ of exactly one $d$ in the zero-divisor basis $\B$ for $D / \langle n \rangle$. An edge exists between two distinct nodes $[\phi(d_1)], [\phi(d_2)]$ if and only if $d_1 d_2 \in \langle n \rangle$. 
\label{zdbasis}
\end{theorem}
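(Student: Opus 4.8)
The plan is to build an explicit bijection $f\colon\B\to V(\Gamma_C(D/\langle n\rangle))$ by $f(d)=[\phi(d)]$, and then to deduce the edge condition from the definition of adjacency in the compressed graph together with the quoted fact from \cite{mulay} that $xy=0$ for all $x\in[r]$, $y\in[s]$ precisely when $rs=0$. Since $\langle n\rangle$ is nontrivial we have $n\neq 0$; and if $n$ happens to be a unit then $D/\langle n\rangle$ is the zero ring, so both $\B$ and $V(\Gamma_C(D/\langle n\rangle))$ are empty and there is nothing to prove. Hence we may assume $n$ is a nonzero non-unit and fix a factorization $n=\prod p_i^{s_i}$ into irreducibles, unique up to associates.

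Next I would show that $f$ maps into the vertex set and is onto it. For $d\in\B$, write $n=dm$: since $d$ is not an associate of $n$, the cofactor $m$ is a non-unit, and since $d$ is a non-unit we get $n\nmid m$ and $n\nmid d$; thus $\phi(m)\neq 0$, $\phi(d)\neq 0$, and $\phi(d)\phi(m)=\phi(n)=0$, so $[\phi(d)]$ is a genuine vertex, depending only on the associate class of $d$ by Lemma \ref{nozd} applied to multiplication by units. Conversely, let $[r]$ be a vertex and write $r=\phi(a)$, using that $\phi$ is onto. By the previous theorem $[r]=[\phi(\gcd(a,n))]$, with $\gcd(a,n)\mid n$. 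If $\gcd(a,n)$ were a unit, then $[r]=[\phi(1)]$, whose annihilator in $D/\langle n\rangle$ is $\{0\}$, contradicting $r\in Z(D/\langle n\rangle)$; if $\gcd(a,n)$ were an associate of $n$, then $n\mid a$ and $r=0$, also impossible. So $\gcd(a,n)$ is a nontrivial divisor of $n$, hence an associate of a unique $d\in\B$, and $f(d)=[r]$.

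Injectivity of $f$ is where the unique factorization hypothesis does the real work, and is the step I expect to be the main obstacle. Suppose $d_1,d_2\in\B$ with $[\phi(d_1)]=[\phi(d_2)]$. Write $d_j$, up to a unit, as $\prod p_i^{a_i^{(j)}}$ with $0\le a_i^{(j)}\le s_i$, and set $m_j=\prod p_i^{s_i-a_i^{(j)}}$, so $d_j m_j$ is an associate of $n$ and hence $\phi(m_j)\in ann(\phi(d_j))$. From $\phi(m_1)\in ann(\phi(d_1))=ann(\phi(d_2))$ we obtain $n\mid d_2 m_1$, and comparing the exponent of each irreducible $p_i$ on the two sides of this divisibility gives $a_i^{(1)}\le a_i^{(2)}$ for every $i$; the symmetric argument gives the reverse inequalities, so $a_i^{(1)}=a_i^{(2)}$ for all $i$, $d_1$ and $d_2$ are associates, and therefore $d_1=d_2$ as elements of $\B$. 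The one place that needs care is translating $n\mid d_2 m_1$ into coordinatewise inequalities on the exponents, which is exactly the point where the argument would fail without unique factorization, so I would spell that step out rather than assert it.

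Finally, once $f$ is a bijection the edge condition is immediate. For distinct vertices $[\phi(d_1)]$ and $[\phi(d_2)]$, adjacency means by definition that $xy=0$ for all $x\in[\phi(d_1)]$, $y\in[\phi(d_2)]$, which by the cited result from \cite{mulay} is equivalent to $\phi(d_1)\phi(d_2)=0$, i.e.\ $\phi(d_1 d_2)=0$, i.e.\ $d_1 d_2\in\langle n\rangle$. I would close by remarking that the degenerate cases — in particular $\langle n\rangle$ maximal, where $n$ is irreducible, $\B=\emptyset$, and $D/\langle n\rangle$ has no nonzero zero-divisors — are covered automatically by the preceding argument.
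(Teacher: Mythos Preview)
Your proof is correct and follows essentially the same approach as the paper: surjectivity via the preceding theorem $[\phi(a)]=[\phi(\gcd(a,n))]$, injectivity by testing against the cofactor $\prod p_i^{s_i-a_i}$ and comparing exponents (the paper phrases this as the contrapositive, exhibiting $d_3=\prod p_i^{s_i-k_i}$ as a witness separating the annihilators), and the edge criterion read off from the definition. Your write-up is in fact slightly more careful than the paper's, since you explicitly verify that each $d\in\B$ yields a genuine vertex before arguing the bijection.
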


\begin{proof}
If $\langle n \rangle$ is maximal, then $\B$ is empty and so also is $\Gamma_C(D/\langle n \rangle)$. Suppose $\langle n \rangle$ is not maximal. As $\langle n \rangle$ is neither maximal nor trivial, let $n = \prod p_i^{s_i}$.

It has already been shown that $[\phi(a)] = [\phi(\gcd(a,n))]$ for all $a \in D$. Thus every equivalence class is the equivalence class of a divisor of $n$. Since the vertices of $\Gamma_C(D / \langle n \rangle)$ do not contain the classes $[0]$ or $[1]$, every vertex of $\Gamma_C(D / \langle n \rangle)$ is the equivalence class of $\phi(d)$ for some $d$ in $\B$.

We show next that this is true for exactly one $d \in \B$. Suppose that $d_1, d_2$ are distinct elements in the zero-divisor basis for $D/\langle n \rangle$. Because the zero-divisor basis is unique up to associates, we may write
$d_1 = u \prod p_i^{t_i}, d_2 = v \prod p_i^{k_i}$ where $u,v$ are units. Since $d_1 \neq d_2$ and by definition of zero-divisor basis they are not associates, some $t_j \neq k_j$. Without loss of generality, let $k_j > t_j$. Consider $d_3= \prod p_i^{s_i-k_i}$. Then clearly $\phi(d_3)$ annihilates $\phi(d_2)$. However, $d_1d_3 = u \prod p_i^{t_i + (s_i-k_i)} = u \prod p_i^{s_i+t_i-k_i}$. But $k_j > t_j$ implies that $s_j + t_j-k_j < s_j$, so $\phi(d_1) \phi(d_3) \neq 0$. We conclude that $ann(\phi(d_1)) \neq ann(\phi(d_2))$.

That an edge exists between distinct vertices $[\phi(d_1)], [\phi(d_2)]$ if and only if $d_1, d_2 \in \langle n \rangle$ follows immediately from the definition of compressed zero-divisor graph.
\end{proof}

\begin{corollary}
The compressed graph of $D / \langle n \rangle$ is always finite.
\end{corollary}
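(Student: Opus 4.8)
The plan is to reduce the statement to a counting fact about divisors of $n$ in a UFD, using the vertex description supplied by Theorem~\ref{zdbasis}. First I would dispose of the degenerate cases exactly as in that theorem's proof: if $\langle n \rangle$ is trivial or maximal, then the zero-divisor basis $\B$ is empty and $\Gamma_C(D/\langle n \rangle)$ has no vertices at all, hence is (vacuously) finite. The case in which $n$ is a unit is subsumed here, since then $D/\langle n \rangle$ has a single annihilator class and no zero-divisor vertices.

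So assume $\langle n \rangle$ is neither trivial nor maximal. Then $n$ is nonzero and not a unit, and we may fix a factorization $n = \prod_{i=1}^{m} p_i^{s_i}$ into finitely many pairwise non-associate irreducibles. By Theorem~\ref{zdbasis}, each vertex of $\Gamma_C(D/\langle n \rangle)$ is the class $[\phi(d)]$ of exactly one $d \in \B$, so the vertex set is in bijection with $\B$, and it suffices to prove that $\B$ is finite.

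Next I would show that every divisor of $n$ is, up to associates, of the form $\prod_{i=1}^m p_i^{k_i}$ with $0 \le k_i \le s_i$ for each $i$. Indeed, if $d \mid n$, write $n = de$; applying unique factorization to $d$ and to $e$ and comparing with the factorization of $n$ shows that the multiset of irreducible factors of $d$ is a sub-multiset of that of $n$, so $d$ is an associate of some $\prod p_i^{k_i}$ with $0 \le k_i \le s_i$. Since no two distinct elements of $\B$ are associates, the assignment $d \mapsto (k_1,\dots,k_m)$ is injective on $\B$ with image contained in the finite set $\prod_{i=1}^m \{0,1,\dots,s_i\}$. Hence $|\B| \le \prod_{i=1}^m (s_i+1)$; in fact, removing the tuple $(0,\dots,0)$ (units) and $(s_1,\dots,s_m)$ (associates of $n$), one gets $|\B| = \prod_{i=1}^m (s_i+1) - 2$. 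Either way $\B$ is finite, so $\Gamma_C(D/\langle n \rangle)$ has finitely many vertices and is a finite graph.

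I do not anticipate a real obstacle: the content is carried entirely by Theorem~\ref{zdbasis} together with the finiteness of the irreducible factorization of $n$ in a UFD. The only points needing a little care are (i) confirming that the edge cases are genuinely covered by the empty-graph argument, and (ii) justifying cleanly that a divisor of $n$ introduces no new irreducible factors, which is precisely where unique factorization is used.
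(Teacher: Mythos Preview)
Your proposal is correct and follows essentially the same approach as the paper: handle the trivial and maximal cases by noting the compressed graph is empty, and otherwise invoke Theorem~\ref{zdbasis} together with the finiteness of the irreducible factorization of $n$ to conclude that $\B$, and hence the vertex set, is finite. The paper's version is simply terser, omitting the explicit divisor count that you supply.
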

\begin{proof}
If $\langle n \rangle$ is maximal or trivial, we are done because the compressed graph is empty. If not, then because $n$ may be expressed as a finite product of irreducibles, the zero-divisor basis of $D / \langle n \rangle$ is always finite.
\end{proof}

\begin{theorem}
Let $D_1, D_2$ be two unique factorization domains, and let $n_1 \in D_1$ and $n_2 \in D_2$ be non-zero and not units. Let
$n_1 = \prod_{i=1}^{A} p_i^{s_i}, n_2 = \prod_{i=1}^{B} q_i^{t_i}$ be factorizations of $n_1, n_2$ into products of irreducibles.
If $A = B$ and each $s_i = t_i$, then $\Gamma_C(D_1 / \langle n_1 \rangle) \cong \Gamma_C(D_2 / \langle n_2 \rangle).$
\end{theorem}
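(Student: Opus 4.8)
The plan is to invoke Theorem \ref{zdbasis} to reduce both compressed graphs to a common combinatorial description in terms of exponent vectors, and then write down the resulting isomorphism explicitly. First I would dispose of the degenerate case: if $n_1$ is irreducible, then $A=1$ and $s_1=1$, so by hypothesis $B=1$ and $t_1=1$ and $n_2$ is irreducible as well; in that situation the zero-divisor bases $\mathfrak{B}_1$ and $\mathfrak{B}_2$ are both empty, so by Theorem \ref{zdbasis} both compressed graphs are empty and the empty map is the desired isomorphism. Hence I would assume $n_1$ (equivalently $n_2$) is not irreducible, so that both bases are nonempty.

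Next I would set up the combinatorial model. By unique factorization, every nontrivial divisor of $n_1$ is, up to associates, of the form $\prod_{i=1}^{A} p_i^{k_i}$ for a unique tuple $(k_1,\dots,k_A)$ with $0 \le k_i \le s_i$; such a divisor is a unit exactly when the tuple is $(0,\dots,0)$ and an associate of $n_1$ exactly when it is $(s_1,\dots,s_A)$. By Theorem \ref{zdbasis} the vertex set of $\Gamma_C(D_1/\langle n_1\rangle)$ is in bijection with $\mathfrak{B}_1$, so I would define $f\colon V(\Gamma_C(D_1/\langle n_1\rangle)) \to V(\Gamma_C(D_2/\langle n_2\rangle))$ by writing a vertex as $[\phi_1(d)]$ for its unique basis element $d = u\prod_i p_i^{k_i} \in \mathfrak{B}_1$ (with $u$ a unit) and setting $f([\phi_1(d)]) = [\phi_2(\prod_i q_i^{k_i})]$. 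Since $0 \le k_i \le s_i = t_i$ and the tuple $(k_i)$ is neither $(0,\dots,0)$ nor $(t_1,\dots,t_B)$, the element $\prod_i q_i^{k_i}$ is a nontrivial divisor of $n_2$, so by Theorem \ref{zdbasis} its class is a genuine vertex; thus $f$ is well-defined. For bijectivity I would note that distinct elements of $\mathfrak{B}_1$ have distinct exponent tuples (two divisors with the same tuple are associate, hence equal in the basis), so they are sent to non-associate divisors of $n_2$ and hence, by Theorem \ref{zdbasis} again, to distinct vertices; surjectivity is symmetric, using Lemma \ref{nozd} to identify $[\phi_2(\prod_i q_i^{l_i})]$ with the vertex of the basis element of $\mathfrak{B}_2$ that is associate to it.

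Finally I would check that $f$ preserves and reflects adjacency. By Theorem \ref{zdbasis}, for distinct vertices $[\phi_1(d)], [\phi_1(d')]$ with $d = u\prod_i p_i^{k_i}$ and $d' = u'\prod_i p_i^{k_i'}$, an edge exists iff $n_1 \mid dd'$, and since $dd' = uu'\prod_i p_i^{k_i + k_i'}$, unique factorization makes this equivalent to $s_i \le k_i + k_i'$ for every $i$. The identical computation in $D_2$ shows $f([\phi_1(d)])$ and $f([\phi_1(d')])$ are adjacent iff $t_i \le k_i + k_i'$ for every $i$, and since $s_i = t_i$ for all $i$ the two conditions coincide; therefore $f$ is a graph isomorphism. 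There is no serious obstacle here: the only substantive input is the divisibility criterion $n \mid dd' \iff s_i \le k_i + k_i'$ for all $i$, which follows from unique factorization and the cancellation property already exploited earlier in the paper, and the only place that really demands care is the bookkeeping with units and associate-class representatives needed to see that the exponent-tuple correspondence descends to a well-defined bijection of vertex sets.
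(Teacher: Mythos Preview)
Your proposal is correct and follows essentially the same route as the paper: define the map on vertices by matching exponent tuples $(k_1,\dots,k_A)$ of basis divisors, invoke Theorem \ref{zdbasis} to see this is a bijection of vertex sets, and verify adjacency via the divisibility criterion $n_1 \mid dd' \iff s_i \le k_i + k_i'$ for all $i$, which transfers to $D_2$ because $s_i = t_i$. The paper's argument is slightly terser (it does not separate out the irreducible case or dwell on units and associate representatives), but the content is the same.
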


\begin{proof}
A zero-divisor basis of $D_1 / \langle n_1 \rangle$ is 
$$ \B_1 = \{ d = \prod_{i=1}^A p_i^{v_i} \mid d \neq n, d \neq 1, 0 \leq v_i \leq s_i \},$$ and a basis of $D_1 / \langle n_2 \rangle$ is given by
$$ \B_2 = \{ d = \prod_{i=1}^B p_i^{w_i} \mid d \neq n, d \neq 1, 0 \leq w_i \leq s_i \}.$$ 

Consider the map $\psi$ (whose domain does not contain the equivalence classes of 0 or 1) defined by
$$\psi \left( \left[ \prod p_i^{z_i} + \langle n_1 \rangle \right] \right) = \left[ \prod q_i^{z_i} + \langle n_2 \rangle \right].$$

By Theorem \ref{zdbasis}, $\psi$ is a map between the vertex sets of $\Gamma_C(D_1 / \langle n_1 \rangle)$ and $\Gamma_C(D_2 / \langle n_2 \rangle)$. Since $A = B$, each $s_i = t_i$, and again by Theorem \ref{zdbasis} each vertex corresponds to exactly one element in the zero-divisor basis, we conclude that $\psi$ is a one-to-one correspondence.

Suppose $[\prod p_i^{f_i}], [\prod p_i^{g_i}]$ are adjacent in $\Gamma_C(D_1 / \langle n_1 \rangle)$.
This happens if and only if 
\begin{align*}
& \prod p_i^{f_i} \prod p_i^{g_i} + \langle n_1 \rangle = \langle n_1 \rangle \\
\iff &\prod p_i^{f_i+g_i} \in \langle n_1 \rangle \\
\iff &\forall i, f_i + g_i \geq s_i = t_i \\
\iff &\prod q_i^{f_i+g_i} \in \langle n_2 \rangle \\
\iff &\prod q_i^{f_i} \prod q_i^{g_i} + \langle n_2 \rangle = \langle n_2 \rangle \\
\iff &\psi \left( \left[\prod p_i^{f_i} \right] \right), \psi \left( \left[ \prod p_i^{g_i} \right] \right) \text{ are adjacent in } \Gamma_C(D_2 / \langle n_2 \rangle).
\end{align*}
Therefore $\psi$ is an isomorphism.
\end{proof}

\begin{corollary}
Let $D$ be a UFD and $\langle n \rangle$ a principal ideal of $D$. Then $\Gamma_C(D/\langle n \rangle)$ is isomorphic to $\Gamma_C(\Z / \langle m \rangle)$ for some $m \in \Z$. 
\end{corollary}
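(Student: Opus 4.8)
The plan is to obtain this as an immediate consequence of the preceding theorem: all I need is an integer $m$ whose factorization into irreducibles has the same number of prime factors, with the same exponents, as a factorization of $n$. Since $\Z$ has infinitely many primes, such an $m$ always exists, and the only remaining work is to dispose of the degenerate cases in which the hypothesis of the preceding theorem does not literally apply.

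First I would treat the cases where $\langle n \rangle$ is trivial or improper, i.e. $n = 0$ or $n$ is a unit. If $n = 0$, then $D / \langle n \rangle \cong D$ is an integral domain, so it has no nonzero zero-divisors and $\Gamma_C(D / \langle n \rangle)$ is the empty graph; taking $m = 0$ gives $\Z / \langle m \rangle \cong \Z$, whose compressed graph is likewise empty, so the isomorphism holds vacuously. If $n$ is a unit, then $D / \langle n \rangle$ is the zero ring; taking $m = 1$ makes $\Z / \langle m \rangle$ the zero ring as well, and both compressed graphs are again empty.

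Now suppose $n$ is nonzero and not a unit, and fix a factorization $n = \prod_{i=1}^{A} p_i^{s_i}$ into irreducibles of $D$. Using the infinitude of the rational primes, choose distinct primes $r_1, \dots, r_A$ of $\Z$ and set $m = \prod_{i=1}^{A} r_i^{s_i} \in \Z$. Then $m$ is nonzero and not a unit, and $m = \prod_{i=1}^{A} r_i^{s_i}$ is its factorization into irreducibles; it has exactly $A$ irreducible factors, with exponents $s_1, \dots, s_A$, the same data as the factorization of $n$. Applying the preceding theorem with $D_1 = D$, $D_2 = \Z$, $n_1 = n$, and $n_2 = m$ then yields $\Gamma_C(D / \langle n \rangle) \cong \Gamma_C(\Z / \langle m \rangle)$, as desired.

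I do not expect a genuine obstacle here; the argument is essentially bookkeeping together with one appeal to the preceding theorem. The only points needing care are to handle the empty-graph cases $n = 0$ and $n$ a unit separately at the outset, and to observe that the case of $n$ irreducible needs no special treatment, since it is already covered by the main construction (there $A = 1$, $m = r_1$ is prime, $\Z / \langle m \rangle$ is a field, and both compressed graphs are empty). The infinitude of the primes of $\Z$ is exactly what guarantees that the construction of $m$ never runs out of primes, no matter how many distinct irreducible factors $n$ has.
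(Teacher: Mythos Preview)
Your proposal is correct and is exactly the intended argument: the paper states this corollary without proof, leaving it as an immediate consequence of the preceding theorem, and your write-up simply makes explicit the choice of $m\in\Z$ with matching exponent pattern (using the infinitude of rational primes) together with the easy disposal of the degenerate cases $n=0$ and $n$ a unit.
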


It is perhaps appropriate we mention that requiring $\Gamma(R)$ (and consequently $\Gamma_C(R)$) be simple seems to have caused some disagreement among the zero-divisor graph community. Axtell's definition of zero-divisor graph in \cite{axtell} does not require that edges exist only between distinct vertices and refers explicitly to "loops" in the graph, despite the fact that he claims Anderson's definition is standard. Anderson himself must remind his reader that "the zero-divisor graph does not detect the nilpotent elements" \cite{anderson}. For Redmond, Anderson's definition posed potential problems, requiring "some conditions that make the nilpotent elements easier to locate" \cite{redmond}.

In this paper also, we advocate changing the standard definitions of $\Gamma(R)$ and $\Gamma_C(R)$ so that looped vertices are allowed. First, only a graph with loops models \emph{all} zero-divisor activity in the ring. Additionally, although the former theorem holds for either definition of $\Gamma_C(R)$, its conditions are not necessary unless we stipulate that $\Gamma_C(R)$ contain loops. As a counterexample, note that in the unlooped case, $\Gamma_C(\Z / \langle p^3 \rangle) \cong \Gamma_C(\Z / \langle pq \rangle)$. We believe that sufficiency is held when $\Gamma_C(R)$ admits loops, and that this sufficiency need only be shown for $\Z / \langle m \rangle$ by the above corollary. 

For certain quotient rings, the next theorem gives necessary and sufficient conditions for compressed graph isomorphism regardless of whether or not we admit loops in $\Gamma_C(R)$.

\begin{theorem} 
Let $D_1, D_2$ be two UFD's. Let $p, p_1, \ldots, p_A \in D_1$, $q, q_1, \ldots q_B \in D_2$ be irreducibles, where no two of the $p_i,q_i$'s are associates respectively. Then
$\Gamma_C(D_1 / \langle p^s \rangle) \cong \Gamma_C(D_2 / \langle q^t \rangle)$ if and only if $s = t$, and $\Gamma_C(D_1 / \langle \prod_{i=1}^{A}p_i \rangle) \cong \Gamma_C(D_2 / \langle \prod_{i=1}^{B} q_i \rangle)$ if and only if $A = B$.
\end{theorem}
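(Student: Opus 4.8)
The plan is to split each biconditional into its two directions and handle them by very different means. For both ``if'' directions I would simply invoke the preceding theorem: applying it with a single prime on each side and common exponent $s=t$ gives $\Gamma_C(D_1/\langle p^s\rangle)\cong\Gamma_C(D_2/\langle q^t\rangle)$, and applying it with $A=B$ primes all of exponent $1$ gives $\Gamma_C(D_1/\langle\prod_{i=1}^A p_i\rangle)\cong\Gamma_C(D_2/\langle\prod_{i=1}^B q_i\rangle)$. (One should note in passing that the isomorphism produced there respects loops as well, since the chain of equivalences establishing adjacency includes the case of a vertex with itself, so the ``if'' directions hold under either convention for $\Gamma_C$.) All the real content is therefore in the two ``only if'' directions, and my approach there is to extract a single numerical invariant — the number of vertices — directly from Theorem~\ref{zdbasis}.

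For the prime-power statement I would first write down the zero-divisor basis of $D_1/\langle p^s\rangle$ explicitly. Since the divisors of $p^s$ in a UFD are exactly the associates of $1, p, p^2, \ldots, p^s$, the basis $\B$ consists of one representative of each of $p, p^2, \ldots, p^{s-1}$, so $|\B| = s-1$; in particular $\B$ is empty precisely when $s=1$, consistent with the fact that $D_1/\langle p\rangle$ is then a domain and its compressed graph is empty. By Theorem~\ref{zdbasis} the vertex set of $\Gamma_C(D_1/\langle p^s\rangle)$ is in bijection with $\B$, hence has exactly $s-1$ elements. A graph isomorphism preserves the number of vertices, so an isomorphism $\Gamma_C(D_1/\langle p^s\rangle)\cong\Gamma_C(D_2/\langle q^t\rangle)$ forces $s-1=t-1$, i.e. $s=t$. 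This argument uses nothing about edges or loops.

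For the squarefree-product statement I would carry out the analogous count. With $n=\prod_{i=1}^A p_i$ a product of pairwise non-associate irreducibles, the divisors of $n$ up to associates are in bijection with the subsets of $\{1,\ldots,A\}$, and the nontrivial ones — excluding the unit $1$ and $n$ itself — correspond to the proper nonempty subsets, so $|\B| = 2^A - 2$. Again by Theorem~\ref{zdbasis}, $\Gamma_C(D_1/\langle n\rangle)$ has $2^A - 2$ vertices, and an isomorphism $\Gamma_C(D_1/\langle \prod p_i\rangle)\cong\Gamma_C(D_2/\langle \prod q_i\rangle)$ then forces $2^A - 2 = 2^B - 2$, hence $A=B$.

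I do not expect a genuine obstacle here; the only points requiring care are the degenerate cases, and checking that the vertex-count identities still yield the stated conclusions there. When $s=1$ or $A=1$ the relevant quotient is a domain, so the graph is empty, and one must observe that $s-1 = t-1$ (respectively $2^A-2 = 2^B-2$) still pins down the other parameter. Beyond this bookkeeping everything follows immediately from the structure theorem, and the fact that the counting argument never refers to edges makes transparent why this biconditional holds whether or not $\Gamma_C$ is allowed to have loops — which is precisely the phenomenon the theorem is meant to illustrate.
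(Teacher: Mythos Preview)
Your proof is correct and follows essentially the same approach as the paper: sufficiency is inherited from the preceding theorem, and necessity is obtained by comparing the cardinalities of the zero-divisor bases (hence of the vertex sets) via Theorem~\ref{zdbasis}. The only difference is cosmetic---you compute the exact counts $s-1$ and $2^A-2$, whereas the paper simply asserts that unequal parameters yield unequal numbers of nontrivial divisors up to associates---but the underlying argument is the same.
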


\begin{proof}
Sufficiency has already been shown. We prove necessity by contraposition.

In the first case, suppose $s \neq t$. Then it is clear that $p^s$, $q^t$ have different numbers of non-trivial divisors up to associates, so the zero-divisor bases for the two rings will have different cardinalities. Therefore the vertex sets of $\Gamma_C(D_1 / \langle p^s \rangle)$, $\Gamma_C(D_2 / \langle q^t \rangle)$ cannot be put into a one-to-one correspondence, and the graphs cannot be isomorphic.

In the second case, suppose $A \neq B$. Similarly, it is clear that 
the vertex sets of $$\Gamma_C(D_1 / \langle \prod^{A}p_i \rangle), \Gamma_C(D_2 / \langle \prod^{B} q_i \rangle)$$ have different cardinalities.
\end{proof}

These graph-isomorphism results generalize those in \cite{endean}, in which is is proved that $\Gamma(\Z_p[x]/\langle x^n \rangle) \cong \Gamma(\Z_{p^n})$.

\section{Conjectures}
We would like to discuss when our results may be extended to $\Gamma(R)$ itself. The following conjectures sugges when this might be possible.

\begin{conjecture} Let $R_1,R_2$ be two commutative rings. Then $\Gamma(R_1) \cong \Gamma(R_2)$ if and only if 
\begin{enumerate}
\item $\Gamma_C(R_1) \cong \Gamma_C(R_2)$, and
\item $\vert R_1 - Z(R_1) \vert = \vert R_2 - Z(R_2) \vert$.
\end{enumerate}
\end{conjecture}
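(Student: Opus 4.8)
The plan is to prove the two directions separately, working throughout with the looped versions of $\Gamma$ and $\Gamma_C$ advocated in Section~3, since the relation being compressed is visible in the graph only once loops are recorded. The key preliminary is a reconstruction lemma describing how $\Gamma(R)$ sits over $\Gamma_C(R)$: within a single class $[r]$, any two (not necessarily distinct) elements $x,y$ satisfy $xy=0$ if and only if $r^2=0$, since $xy=0\iff x\in ann(y)=ann(r)\iff rx=0\iff r\in ann(x)=ann(r)\iff r^2=0$. Thus the subgraph of $\Gamma(R)$ induced on $[r]$ is a complete graph carrying a loop at every vertex when $r^2=0$, and an edgeless, loopless set otherwise, and which case occurs is exactly recorded by the loop at $[r]$ in $\Gamma_C(R)$. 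Together with the Mulay equivalence $rs=0\iff xy=0$ for all $x\in[r],y\in[s]$, this shows $\Gamma(R)$ is a prescribed blow-up of $\Gamma_C(R)$: replace each vertex $[r]$ by $\lvert[r]\rvert$ vertices, put a loop-clique or an independent set on them according to the loop, and join two blobs completely exactly when the corresponding vertices were adjacent in $\Gamma_C(R)$. The non-zero-divisors of $R$ appear nowhere in this description.

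For the ``if'' direction I would take a graph isomorphism $\bar\varphi\colon\Gamma_C(R_1)\to\Gamma_C(R_2)$, lift it to a bijection of zero-divisors by choosing bijections $[r]\to\bar\varphi([r])$ vertex by vertex, and verify via the blow-up description that the result is an isomorphism $\Gamma(R_1)\to\Gamma(R_2)$; the non-zero-divisors, not being vertices, need no matching. The step I expect to be the real obstacle is that this lift exists only when $\lvert[r]\rvert=\lvert\bar\varphi([r])\rvert$ for every vertex, and hypotheses (1)--(2) do not obviously supply this: (1) produces $\bar\varphi$ but says nothing about class sizes, and (2) counts only $R_i-Z(R_i)$. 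So the crux of the ``if'' direction is a rigidity claim --- that for the rings in question $\Gamma_C(R)$ and $\lvert R-Z(R)\rvert$ together determine each $\lvert[r]\rvert$. I would attempt this first for finite rings, where $\lvert R-Z(R)\rvert=\lvert R^{\times}\rvert$ and the decomposition of a finite commutative ring into local factors (whose trace in $\Gamma_C(R)$ one would need to make precise) should tightly constrain $\lvert R\rvert$ and hence all class sizes, with the local case $\lvert R\rvert=\lvert R/\mathfrak m\rvert\cdot\lvert\mathfrak m\rvert$ as the base.

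For the ``only if'' direction, condition (1) is the routine half: in the looped graph $ann(r)\setminus\{0\}$ is precisely the closed neighborhood of $r$, so $ann(r)=ann(s)$ if and only if $r$ and $s$ are twins, twinhood is an isomorphism invariant, and a graph isomorphism $\Gamma(R_1)\to\Gamma(R_2)$ therefore descends to a graph isomorphism $\Gamma_C(R_1)\to\Gamma_C(R_2)$, adjacency of classes being witnessed by representatives. Condition (2) is where I expect the statement itself to require adjustment: since $V(\Gamma(R))=Z(R)\setminus\{0\}$, the graph carries no information whatever about $R-Z(R)$ --- for instance $\Gamma(\mathbb{Z}_2)$, $\Gamma(\mathbb{Z}_3)$ and $\Gamma(\mathbb{Z})$ are all the empty graph, yet the complements of the zero-divisor sets have sizes $1$, $2$ and $\aleph_0$. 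I would therefore expect the conjecture to hold only after an extra hypothesis that makes $\lvert R-Z(R)\rvert$ recoverable --- for example restricting to finite rings and also assuming $\lvert R_1\rvert=\lvert R_2\rvert$, so that $\lvert R_i-Z(R_i)\rvert=\lvert R_i\rvert-\lvert V(\Gamma(R_i))\rvert-1$, or adopting Beck's convention of taking all of $R$ for the vertex set, under which the non-zero-divisors are exactly the vertices whose only neighbor is the unique universal vertex. Isolating the weakest workable hypothesis and then marrying it to the rigidity claim above is, I expect, the bulk of the work.
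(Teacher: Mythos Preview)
The paper states this as a \emph{conjecture} in its final section and offers no proof; there is therefore no argument in the paper to compare your proposal against. What you have written is not a proof of the conjecture but an analysis of it, and on that level your observations are sharp and, in one direction, decisive.

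Your counterexample to the ``only if'' direction is correct and kills the conjecture as stated: any two integral domains have empty (hence isomorphic) zero-divisor graphs under Anderson's convention, yet $\lvert\Z_2\setminus Z(\Z_2)\rvert=1\neq 2=\lvert\Z_3\setminus Z(\Z_3)\rvert$. So condition~(2) simply cannot be forced by $\Gamma(R_1)\cong\Gamma(R_2)$ without further hypotheses, exactly as you say. Your derivation of condition~(1) via closed neighborhoods and twins in the looped graph is also correct and is the standard way to push a graph isomorphism down to the compressed graph.

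For the ``if'' direction, your blow-up description of $\Gamma(R)$ over $\Gamma_C(R)$ is accurate, and you have put your finger on the real issue: nothing in hypotheses~(1) and~(2) immediately pins down the individual class sizes $\lvert[r]\rvert$, so lifting $\bar\varphi$ need not produce a bijection. Your proposed rigidity claim---that $\Gamma_C(R)$ together with $\lvert R\setminus Z(R)\rvert$ determines every $\lvert[r]\rvert$---is the genuine content that would have to be supplied, and it is not obvious even for finite rings. In short, you have not proved the conjecture; you have correctly identified that one direction is false outright and that the other reduces to an unproved rigidity statement. That is more than the paper itself does.
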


We would also like to extend our results to when one quotients a UFD by any ideal $I$.
Note first that every ideal of a ring may be written as a union of principal ideals. The trivial union $I = \cup_{x \in I} \langle x \rangle$ suffices to show that this is true; equality holds because ideals are closed under multipliciation by the elements of a ring by definition. A more general definition of zero-divisor basis follows.

\begin{definition}
Let $D$ be a unique factorization domain, and let $I = \cup_{\alpha \in \Lambda} \langle n_\alpha \rangle$ be an expression of $I$ as a union of principal ideals. A \textbf{zero-divisior basis} $\B$ of $D / I$ is the set of all divisors $d$ of some $n_\alpha$, $d \not \in I$, such that no two distinct elements of $\B$ are associates.
\end{definition}

If $I$ is expressed minimally, we may be able to replace the stipulation that $d \not \in I$ with the requirement that the divisor $d$ merely be nontrivial.

\begin{conjecture}
Let $D$ be a UFD, $I = \cup_{\alpha \in \Lambda} \langle n_\alpha \rangle$ be an ideal of $D$ expressed as a minimal union, and $\phi$ the canonical homomorphism from $D$ onto $D / I $.
Then $\forall a \in D, [\phi(a)] = [\phi(\gcd \{a, n_{\alpha}  \text{ for all }\alpha \in \Lambda \}]$.
\end{conjecture}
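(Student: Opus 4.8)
The plan is to imitate the proof of the principal-ideal theorem $[\phi(a)] = [\phi(\gcd(a,n))]$, trading the single factorization $n = \prod p_i^{s_i}$ for prime-exponent bookkeeping spread over the whole family $\{n_\alpha\}_{\alpha\in\Lambda}$. Throughout write $g := \gcd(\{a\}\cup\{n_\alpha : \alpha\in\Lambda\})$ and $g_0 := \gcd\{n_\alpha : \alpha\in\Lambda\}$, so $g = \gcd(a,g_0)$. First I would clear away well-definedness and degenerate cases: when $a\neq 0$ only finitely many irreducibles divide $a$, so $v_p(g) = \min\bigl(v_p(a),\,v_p(g_0)\bigr) = 0$ for all but finitely many irreducibles $p$, whence $g$ is an honest element of $D$, unique up to associates, and $[\phi(g)]$ is well-defined by the same argument used for well-definedness of $[\phi(\gcd(a,n))]$. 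If $I$ is maximal there is nothing to prove; the case $a\in I$ must be handled by showing $g\in I$, and the case $a=0$ requires $g_0$ to exist at all --- both of these are precisely where the minimal-union hypothesis should earn its keep, echoing the paper's remark that minimality lets one replace ``$d\notin I$'' by ``$d$ nontrivial.''

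For the substance I would establish $ann(\phi(a)) = ann(\phi(g))$ by two containments, working with divisibility in $D$ and then translating into membership in $I$; note $g\mid a$ and $g\mid n_\alpha$ for all $\alpha$. The containment $ann(\phi(g))\subseteq ann(\phi(a))$ is immediate: if $xg\in\langle n_\alpha\rangle\subseteq I$ then $xa = (xg)\cdot(a/g)\in\langle n_\alpha\rangle\subseteq I$. For the reverse, take $x$ with $x+I\in ann(\phi(a))$, so $n_{\alpha_0}\mid xa$ for some $\alpha_0$, and seek $\beta$ with $n_\beta\mid xg$. The key reduction is that it suffices to produce $\beta$ with $n_\beta$ an associate of $g_0$: writing $m_p := v_p(n_\beta) = v_p(g_0)$ (whence $v_p(g) = \min(v_p(a),m_p)$), one checks $v_p(n_\beta)\le v_p(x)+v_p(g)$ prime by prime. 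If $v_p(a)\ge m_p$ this reads $m_p\le v_p(x)+m_p$; if $v_p(a)<m_p$ it reads $m_p\le v_p(x)+v_p(a)$, and this holds because $m_p\le v_p(n_{\alpha_0})\le v_p(x)+v_p(a)$, the last inequality being exactly $n_{\alpha_0}\mid xa$. This is the cancellation/Euclid's-lemma manoeuvre of the principal-ideal theorem, now indexed by $\Lambda$.

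Everything thus funnels into a single lemma: \emph{if $I = \bigcup_\alpha\langle n_\alpha\rangle$ is a minimal union, then $g_0 = \gcd\{n_\alpha : \alpha\in\Lambda\}$ is an associate of some member $n_\beta$ of the family}. I expect this to be the main obstacle, and it is also where the UFD hypothesis looks essential: since a UFD satisfies the ascending chain condition on principal ideals, any totally ordered union of principal ideals is already principal, so a genuinely non-principal minimal union must be ``branching,'' and one needs to control that branching. I would first try to extract from minimality that the family is closed under the gcd of any two of its members --- which, together with the chain condition, would force $g_0$ to be attained --- and if that cannot be pushed through, the honest statement may have to carry the stronger hypothesis that $\{n_\alpha\}$ is closed under pairwise gcd (or that the union is suitably directed) rather than merely being ``minimal.'' At a minimum I would record the conjecture for Noetherian $D$, where a minimal union must be finite and $g_0$ is then plainly the gcd of finitely many members of the family.
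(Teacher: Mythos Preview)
The statement you are attempting is labelled a \emph{conjecture} in the paper and sits in the final ``Conjectures'' section; the paper offers no proof, so there is nothing to compare your argument against. What I can do is assess your proposal on its own terms.

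Your reduction is clean and correct as far as it goes. The easy containment $ann(\phi(g))\subseteq ann(\phi(a))$ is fine, and your prime-by-prime verification that $g_0\mid xg$ whenever some $n_{\alpha_0}\mid xa$ is exactly the right generalisation of the Euclid's-lemma step from the principal case. You then correctly isolate the entire difficulty in the single claim that $g_0=\gcd\{n_\alpha\}$ is (an associate of) some member $n_\beta$ of the family, so that $\langle g_0\rangle\subseteq I$.

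The problem is that this key lemma is false, and with it the conjecture under your reading of the gcd. Take $D=k[x,y]$ and $I=\langle x,y\rangle^2$. A minimal union expression of $I$ exists (take, up to associates, those $n\in I$ admitting no proper divisor in $I$; ACC on principal ideals in a UFD shows this family covers $I$, and minimality is immediate). Any such family must contain associates of $x^2$ and of $y^2$, since the only element of $I$ dividing $x^2$ is $x^2$ itself up to units, and likewise for $y^2$. Hence $g_0\mid\gcd(x^2,y^2)=1$, so $g_0$ is a unit and $g=\gcd(a,g_0)$ is a unit for every $a$. The conjecture would then force $[\phi(a)]=[\phi(1)]$ for all $a$, i.e.\ $ann(\phi(a))=0$ for all $a$; but $\bar x\cdot\bar x=0$ in $D/I$, so $ann(\bar x)\neq 0$. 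Thus the conjecture, read as $[\phi(a)]=[\phi(\gcd(\{a\}\cup\{n_\alpha\}))]$, fails outright.

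You were right to flag the lemma as the main obstacle and to suggest that a stronger hypothesis than ``minimal'' may be needed. In fact the example shows that no hypothesis on the \emph{presentation} of $I$ can rescue this particular formula: the issue is that the gcd of the generators is too coarse an invariant once $I$ is genuinely non-principal. A repaired statement would have to replace $\gcd\{a,n_\alpha\}$ by something finer --- for instance an element built from the individual $\gcd(a,n_\alpha)$, in line with the paper's next conjecture about products of basis elements --- rather than by the single global gcd.
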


\begin{conjecture}
Let $D$ be a unique factorization domain, and $I$ an ideal of $D$ where 
$$ I = \cup_{\alpha \in \Lambda} \langle \prod_{k=1}^{Q_\alpha} p_{\alpha_k}^{s_{\alpha_k}} \rangle$$
is a minimal expression of $I$ as a union of principal ideals. The distinct nodes of the compressed zero-divisor graph for $D/I$ are given by the equivalence classes of elements of the form $\phi(\prod d_i)$ where each $d_i$ is an element of the zero-divisor basis. An edge exists between two nodes $\phi(\prod d_i), \phi(\prod b_i)$ if and only if $n_\alpha \mid \prod d_ib_i$ for some $\alpha \in \Lambda$.
\end{conjecture}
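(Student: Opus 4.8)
The plan is to bootstrap from the principal case, reusing Lemma~\ref{nozd}, Lemma~\ref{zd}, and the argument of Theorem~\ref{zdbasis}. The key preliminary is a reduction lemma --- essentially Conjecture 13, in the slightly refined form actually needed: \emph{every nonzero zero-divisor class of $D/I$ equals $[\phi(m')]$ for some finite product $m'$ of elements of $\B$}. To prove it, fix $a\in D$ with $\phi(a)$ a nonzero zero-divisor and write $a=y\,m$, where $m$ collects exactly those prime-power factors of $a$ whose prime divides some $n_\alpha$, so that $\gcd(y,n_\alpha)=1$ for every $\alpha$. First I would check $\phi(y)\notin Z(D/I)$: if $n_\alpha\mid yx$ then $\gcd(y,n_\alpha)=1$ and Euclid's Lemma give $n_\alpha\mid x$, i.e.\ $\phi(x)=0$. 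Hence by Lemma~\ref{nozd}, $[\phi(a)]=[\phi(m)]$, and $\phi(m)\neq 0$ (otherwise $ann(\phi(a))=ann(0)$ forces $\phi(a)=0$), so $m\notin I$. Next I would cap exponents: writing $m=\prod p^{k_p}$, letting $v_p(\cdot)$ denote the exponent of $p$ in a factorization, and $S_p=\sup_\alpha v_p(n_\alpha)$, set $m'=\prod p^{\min(k_p,\,S_p)}$. A valuation-by-valuation comparison shows that, for each fixed $\alpha$, the divisibilities $n_\alpha\mid mx$ and $n_\alpha\mid m'x$ are equivalent --- at primes with $k_p\le S_p$ the exponents of $m$ and $m'$ agree, and at primes with $k_p>S_p$ both conditions hold vacuously at $p$ --- so $ann(\phi(m))=ann(\phi(m'))$ and $[\phi(a)]=[\phi(m')]$. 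As before $m'\notin I$, and no prime-power factor $p^{e_p}$ of $m'$ lies in $I$ (else $\phi(m')=0$); each such $p^{e_p}$ is a non-unit dividing some $n_\alpha$ (one realizing $S_p$, or any $\alpha$ with $v_p(n_\alpha)\ge k_p$ when $S_p=\infty$), hence lies in $\B$, and $m'$ is the required product.

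Granting this, the vertex description follows. By the reduction lemma every vertex of $\Gamma_C(D/I)$ has the form $[\phi(\prod d_i)]$ with $d_i\in\B$. Conversely, fix $d_1,\dots,d_r\in\B$ with witnesses $d_i\mid n_{\alpha_i}$. Then $\phi(d_1)$ is a non-unit: otherwise $\phi(n_{\alpha_1}/d_1)=\phi(d_1)^{-1}\phi(n_{\alpha_1})=0$, so $n_{\alpha_1}/d_1\in I$, whence some $n_\beta\mid n_{\alpha_1}$ with $\beta\neq\alpha_1$, contradicting minimality of the union. Consequently $\phi(\prod d_i)$ is a non-unit, and if in addition $\prod d_i\notin I$ then again by minimality $x:=n_{\alpha_1}/d_1\notin I$ while $x\prod d_i\in\langle n_{\alpha_1}\rangle\subseteq I$, so $\phi(x)\neq 0$ annihilates $\phi(\prod d_i)$ and $\phi(\prod d_i)$ is a genuine zero-divisor. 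Discarding the cases where $\prod d_i\in I$, the vertices of $\Gamma_C(D/I)$ are exactly these classes.

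The edge criterion is then immediate. Since class multiplication is well-defined (by Mulay's result), two distinct vertices $[\phi(\prod d_i)]$ and $[\phi(\prod b_i)]$ are adjacent if and only if $\phi(\prod d_i)\,\phi(\prod b_i)=0$ in $D/I$, i.e.\ $\prod d_i b_i\in I=\cup_\alpha\langle n_\alpha\rangle$, i.e.\ $n_\alpha\mid\prod d_i b_i$ for some $\alpha\in\Lambda$ --- exactly the claimed condition, using nothing beyond the definition of the compressed graph and of $I$ as a union.

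The hard part will not be any single manipulation above, but controlling an infinite index set $\Lambda$. One must first establish that a \emph{minimal} union $I=\cup_\alpha\langle n_\alpha\rangle$ exists (a Zorn-type argument, or a direct description of the divisibility-minimal elements of $I$), and then that the resulting vertex/edge data does not depend on which minimal union is chosen. The quantities $S_p=\sup_\alpha v_p(n_\alpha)$ and the putative $\lcm$ of the $n_\alpha$ need not be realized by elements of $D$, so every reduction must be phrased valuation-theoretically, as above. The steps where I invoked minimality --- certifying that $n_\alpha/d\notin I$, and (as in the remark after the general definition) passing from ``$d\notin I$'' to ``$d$ nontrivial'' --- are precisely where an honest argument must be delicate. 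I expect these foundational points, rather than the ring-theoretic core (which mirrors Theorem~\ref{zdbasis}), to be the true obstacle, and the reason the statement is left as a conjecture.
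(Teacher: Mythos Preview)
The statement is presented in the paper as a conjecture, in the final section titled ``Conjectures,'' and no proof or proof sketch is given; there is therefore nothing in the paper to compare your attempt against. Your outline is a natural extension of the principal-ideal argument of Theorem~\ref{zdbasis}, and you have correctly isolated the real difficulties --- the existence and canonicity of a minimal union $I=\cup_\alpha\langle n_\alpha\rangle$ when $\Lambda$ may be infinite, and the fact that quantities like $\sup_\alpha v_p(n_\alpha)$ need not be realized in $D$ --- which are plausibly why the author leaves the statement open. In that sense your proposal already goes beyond what the paper offers.
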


\begin{conjecture}Let $D_1$, $D_2$ be unique factorization domains and $I_1, I_2$ be any two ideals of $D_1, D_2$ respectively which are neither maximal nor trivial.
Suppose $$ I_1 = \cup_{\alpha \in \Lambda} \langle \prod_{k=1}^{Q_\alpha} p_{\alpha_k}^{s_{\alpha_k}} \rangle.$$
Then $\Gamma_C(D_1/I_1) \cong \Gamma_C(D_2/I_2)$ if we may write
$$ I_2 = \cup_{\alpha \in \Lambda} \langle \prod_{k=1}^{Q_\alpha} q_{\alpha_k}^{s_{\alpha_k}} \rangle.$$
\end{conjecture}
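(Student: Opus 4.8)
The plan is to reproduce the argument of the isomorphism theorem for principal ideals (the one assuming $A=B$ and each $s_i=t_i$), but with the conjectured structural descriptions — the $\gcd$-description of annihilator classes and the description of the nodes and edges of $\Gamma_C(D/I)$ — in place of the principal-ideal versions proved above. Write $n_\alpha=\prod_{k=1}^{Q_\alpha}p_{\alpha_k}^{s_{\alpha_k}}$ and $m_\alpha=\prod_{k=1}^{Q_\alpha}q_{\alpha_k}^{s_{\alpha_k}}$ for the given generators of $I_1$ and $I_2$, and let $\phi_1,\phi_2$ be the respective canonical homomorphisms. The first step is bookkeeping: after re-indexing we may assume that $p_{\alpha_k}$ and $p_{\beta_j}$ are associates exactly when $q_{\alpha_k}$ and $q_{\beta_j}$ are, a coherence across all of $\Lambda$ that is precisely what the hypothesis ``$I_2$ may be written in the same shape'' should be taken to supply. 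It yields a bijection $\rho$ from the set of associate-classes of irreducibles dividing some $n_\alpha$ onto the corresponding set for $I_2$, sending the class of $p_{\alpha_k}$ to that of $q_{\alpha_k}$, with matching exponents.

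Next I would extend $\rho$ multiplicatively. Every element $d$ of a zero-divisor basis $\B_1$ of $D_1/I_1$ divides some $n_\alpha$, so up to a unit $d=\prod_k p_{\alpha_k}^{v_k}$ with $0\le v_k\le s_{\alpha_k}$; set $\rho(d)=\prod_k q_{\alpha_k}^{v_k}$. The point to check is that this is well-defined on $\B_1$ and lands in $\B_2$: membership of a divisor in $I_i$ is, by definition of the zero-divisor basis, detected by divisibility by one of the generators, and divisibility between products of irreducible powers is a coordinatewise comparison of exponents, a condition $\rho$ preserves because the exponents $s_{\alpha_k}$ are literally the same on the two sides. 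Hence $d\notin I_1\iff\rho(d)\notin I_2$, no two distinct members of $\rho(\B_1)$ are associates, and $\rho\colon\B_1\to\B_2$ is a bijection whose inverse is built the same way from $\rho^{-1}$.

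Then, invoking the conjectured description of the graph, each vertex of $\Gamma_C(D_1/I_1)$ has the form $[\phi_1(\prod_i d_i)]$ with $d_i\in\B_1$, and I would define $\psi([\phi_1(\prod_i d_i)])=[\phi_2(\prod_i\rho(d_i))]$. Three things need checking. First, $\psi$ is well-defined: two products $\prod_i d_i,\prod_i d_i'$ represent the same vertex iff they have the same annihilator modulo $I_1$, which by the $\gcd$/minimum-exponent analysis behind the conjectured description is a condition on the tuple of minimum exponents of $\prod_i d_i$ against each $n_\alpha$, and $\rho$ carries this tuple over verbatim (to the tuple against each $m_\alpha$); in particular $\psi$ neither loses nor creates the excluded classes $[0],[1]$. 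Second, $\psi$ is a bijection of vertex sets, its inverse being induced by $\rho^{-1}$ in the same fashion. Third, $\psi$ preserves adjacency: the vertices $[\phi_1(\prod_i d_i)]$ and $[\phi_1(\prod_i b_i)]$ are adjacent iff $n_\alpha\mid\prod_i d_i b_i$ for some $\alpha$, and comparing exponents this holds iff $m_\alpha\mid\prod_i\rho(d_i)\rho(b_i)$ for that same $\alpha$, i.e. iff the images are adjacent. This exhibits $\psi$ as a graph isomorphism.

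The main obstacle is that the whole argument rests on the two preceding conjectures, which are themselves unproven; the substantive work is to establish them, and the delicate point is pushing the minimum-exponent/Euclid's-Lemma computation of annihilator classes from the single-generator case through to an arbitrary, possibly infinite, index set $\Lambda$. When $I=\cup_\alpha\langle n_\alpha\rangle$ the condition ``$xa\in I$'' is the disjunction ``$\exists\alpha\colon n_\alpha\mid xa$'' rather than a single divisibility, so it is not immediate that the annihilator of $\phi(a)$ is pinned down by finite exponent data — one must control how the various generators interact, which is also where the minimality hypothesis of those conjectures is presumably used. Granting that structural input, the only new difficulty internal to the present conjecture is the coherent renaming $\rho$ when one irreducible divides several generators — matching the ``shape'' of $I_1$ by $I_2$ across all of $\Lambda$ at once rather than generator by generator; once that is in hand, the remainder is the exponent comparison already carried out for principal ideals.
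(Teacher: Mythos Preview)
The paper gives no proof of this statement: it is listed as a conjecture in the final section, with no argument attached. So there is nothing in the paper to compare your proposal against.

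What you have written is not a proof but a conditional reduction: you show that \emph{if} the two preceding conjectures (the $\gcd$-description of annihilator classes for general $I$, and the vertex/edge description of $\Gamma_C(D/I)$) hold, then the present conjecture follows by the same exponent-matching bijection used in the principal-ideal theorem. You say this yourself in your final paragraph. As a reduction this is the natural strategy and mirrors exactly how the paper's proved theorem (for $\langle n\rangle$) is derived from the proved structural results that precede it; the paper's placement of these three conjectures in sequence strongly suggests the author had the same derivation in mind. But since those structural conjectures are themselves unproven in the paper, your argument inherits their status: it is a plausible outline, not a proof, and the genuine gap is precisely the one you identify --- establishing the annihilator-class description when $I$ is an arbitrary (possibly infinite) union of principal ideals.
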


\end{document}